\documentclass[12pt,a4paper]{scrartcl} 
\KOMAoptions{%
draft=false,
abstract=true,
numbers=enddot
}

\usepackage{amsmath}    
\usepackage{amsthm}     
\usepackage{amsfonts,amssymb}
\usepackage{array}

\usepackage[labelsep=period,justification=centerlast]{caption}
\captionsetup{font={small,sf}}

\usepackage{indentfirst}

\usepackage{xcolor}
\usepackage{graphicx}


\voffset -10pt
\headheight 0pt
\headsep 0pt
\footskip 33pt
\setlength{\textheight}{\paperheight}
\addtolength{\textheight}{-1in}\addtolength{\textheight}{-\voffset}
\addtolength{\textheight}{-\headheight}
\addtolength{\textheight}{-\headsep}
\addtolength{\textheight}{-\footskip}
\addtolength{\textheight}{-1.5cm}

\marginparwidth 0pt
\oddsidemargin  0pt
\evensidemargin  0pt
\marginparsep 0pt
\topmargin   0pt
\textwidth   6.5in

\usepackage[dvipdfmx]{hyperref}
\hypersetup{%
colorlinks=true, linkcolor=blue, citecolor=blue, filecolor=blue, urlcolor=blue,
pdfauthor={A. Kurmosenko}, pdftitle={Bounds for spiral splines} , pdfsubject=, pdfkeywords={spiral biarc bilens}}

%

\setcounter{topnumber}{3} 
\setcounter{totalnumber}{2} 

\addtolength{\textfloatsep}{-9pt}

\addtokomafont{section}{\Large}
\addtokomafont{subsection}{\large\itshape}

\clubpenalty=10000
\widowpenalty=10000

\DeclareMathOperator{\const}{const}

\newcommand{\Skip}[1]{}

\begin{document}

\setlength{\abovedisplayskip}{6pt plus 2pt minus 3pt}  
\setlength{\belowdisplayskip}{5pt plus 2pt minus 3pt}  


\newcommand{\atanh}{\mathop{\rm arth}\nolimits}

\newtheorem{Prop}{Proposition}
\newtheorem{Stat}{Statement}
\newtheorem*{thm}{Theorem}

\newtheoremstyle{Bprop}%
{3pt}
{}
{\itshape}
{}
{}
{}
{.5em}
{}

\newtheorem{Bprop}{Property}

\newsavebox{\TmpBox}
\newcommand{\tmp}{}
\newlength{\tmplength}                    %

\newcommand{\Diam}[1]{\varnothing_{#1}}
%
\newcommand{\Equa}[2]{\begin{equation}#2\label{#1}\end{equation}}
\newcommand{\equa}[1]{\[ #1 \]}

%
%
\newcommand{\myfrac}[2]{{\ifmmode{}^{#1}\!/_{\!#2}\else${}^{#1}\!/_{\!#2}$\fi}}
\newcommand{\BR}[1]{\left[#1\right]}
\newcommand{\Brace}[1]{\left\{#1\right\}}

%
%
\newcommand{\abs}[1]{\left\lvert#1\right\rvert}
\newcommand{\sgn}{\mathop{\rm sgn}\nolimits}
\newcommand{\Deg}[1]{{\ifmmode{#1}^\circ\else${#1}^\circ$\fi}}
%
\newcommand{\Arc}[1]{\displaystyle{\buildrel\,\,\frown\over{#1}}}
\newcommand{\ieq}{\,{=}\,}
\newcommand{\ineq}{\,{\not=}\,}
\newcommand{\ilt}{\,{<}\,}
\newcommand{\igt}{\,{>}\,}
\newcommand{\In}{\,{\in}\,}
\newcommand{\ile}{\,{\le}\,}
\newcommand{\ige}{\,{\ge}\,}

\newcommand{\Eqref}[1]{\stackrel{\eqref{#1}}{=}}
%
%
%
\newcommand{\GT}[1]{$#1\igt0$}
\newcommand{\GE}[1]{$#1\ige0$}
\newcommand{\LT}[1]{$#1\ilt0$}
\newcommand{\LE}[1]{$#1\ile0$}
\newcommand{\EQ}[1]{$#1\ieq0$}
\newcommand{\NE}[1]{$#1\ineq0$}
%
\newcommand{\So}{\quad\Longrightarrow\quad}
\newcommand{\Vect}[1]{\overrightarrow{#1}}
\newcommand{\nvec}[1]{\mathbf{n}(#1)}
\newcommand{\HM}{\hphantom{{-}}}

\def\figurename{Fig.}
%
%
\def\FigDir{}

\newcommand{\Figref}[1]{\ref{F#1}}
\newcommand{\RefFig}[2][]{Fig.\;\Figref{#2}#1} 
\newcommand{\RefFigs}[2][]{Figs.\;\Figref{#2}#1} 
\newcommand{\Infigw}[2]{
\includegraphics[width=#1]{\FigDir#2.eps}}

\newcommand{\Infig}[3]{\Infigw{#1}{#2}\caption{{\small #3}}\label{F#2}}


\newcommand{\Pfig}[3]{
\Infig{#1}{#2}{#3}}

\newcommand{\eg}{e.\,g.}

\newcommand{\quo}[1]{``#1''}  

\author{Alexey Kurnosenko}
\date{}

\title{Bounds for spiral and piecewise spiral splines}

\maketitle{}

\begin{abstract}
This note is the updated outline of the article 
{\em ``Interpolational properties of planar spiral curves''},
Fund. and Applied Math., {7}(2001), N2, 441--463, published in Russian.
The main result establishes boundary regions for spiral and piecewise spiral splines,
matching given data.
The width of such region can serve as the measure of fairness of the
point set, subjected to interpolation.
Application to tolerance control of curvilinear profiles is discussed.
\end{abstract}

\section{Introduction}

This note is intended to present in English the results of the article \cite{AKinterpol},
which has arosed interest among researchers, 
working with spirals, i.\,e.  planar curves with monotone curvature. 

It is not a pure translation of \cite{AKinterpol}. Some auxiliary results of the article,
such as statements~\eqref{QVogt},
were originally developed for \quo{very short} spiral arcs,
namely, one-to-one projectable onto the chord.
These statements are now essentially expanded,
and presented to readers
as the elements of the theory of spiral arcs
(see \cite{AKparab, Bilens} and references herein).
Here corresponding proofs are omitted.

Only final results of article \cite{AKinterpol} are reproduced in this note.
They establish boundary regions for spiral and piecewise spiral splines, matching given planar point set.
The width of such region is considered as the objective measure of fairness of the given data.

Section~\ref{sec:Demo} and Appendix \ref{sec:Test}, absent in the original article, 
consider the subject in the view of tolerances control of curvilinear profiles.

\section{Demonstration of the bounding region}\label{sec:Demo}


We consider problems, related to interpolation of planar point sets.
Often one can more or less definitely decide: \quo{this interpolant seems to be good, and this one
is not} (\eg, contains unwanted oscillations). 
In brief, below we consruct the region, enclosing all \quo{good interpolants}.

To begin with, let us demonstrate such regions in the context of two problems,
{\em interpolation of function} vs {\em curve interpolation}.
Let points $(x_i,y_i$) be defined by some function $y\ieq f(x)$
as in \RefFig{Function}.
For the point set {a)}, among various interpolants, we definitely reject non-monotonous ones,
because the sequence $y_i$ is monotone. We assume that the designer of this point set takes care
to present all existing extrema of the function to improve the quality of future interpolation.
The region, bounding all monotone interpolants, is shown as the union of rectangles.

\begin{figure}[t]
\centering%
\Pfig{.8\textwidth}{Function}{Trivial bounding regions for function interpolation}%
\end{figure}

\begin{figure}[b]
\centering%
\Pfig{1\textwidth}{Tolerance}{
Drawing with curvilinear profile ABCD; 
blue: bounding region, enclosing all possible spiral interpolants;
green: bounds for the same profile with 7 interpolation nodes.}%
\end{figure}
      
Now consider the sequence of non-monotone data with maximum at point~5 (point set in \RefFig[(b,c)]{Function}).
If the point $(x_5,y(x_5))$ definitely corresponds to the maximum of $y(x)$,
bounding region can still be constructed (the case~{b)}).

For {\em function interpolation} these regions are trivial and not interesting.
Analogous regions for {\em curve interpolation} turned out to be much more interesting and useful.
Drawing with curvilinear profile $ABCD$ in \RefFig{Tolerance} yields an example.
The coordinates $(x_i,y_i)$ of the interpolation nodes could result from some known curve
$[x(t),y(t)]$, which might have an exact representation in rather complicated expressions or, \eg,
in terms of differential equations. 
%
In real practice such profile would be presented by 20--50 points.
We do with only 4 points, just to be able to examine the bounding region
without a microscope.

Data for {\em curve} interpolation include values of two functions $x(t)$ and $y(t)$,
with no argument values~$t_i$. Monotonicity and extrema of these functions have no importance:
these features are not even invariant under rotations. To obtain some invariant conclusions,
one would like to analyze the behavior of curvature $k(t)$. The analogues of ``unwanted extrema'' of $f(x)$
might be extra extrema of curvature. Contrary to extrema of function, 
whose minimal number is directly visible from the plots like \RefFig{Function},
a minimum of curvature extrema is far from being evident.
Nevetherless, given data $x_i,y_i$ 
(supplemented with tangents at the endpoints) allows us to
detect if the original curve could be a spiral.

Below we prove that all possible spiral interpolants
stay within the narrow region (shown in blue in the above example $ABCD$, of the width ${\approx}1.1$).
Conversely, any curve, going beyond these limits, is not a spiral.

\section{Geometric preliminaries}

First we explain the simplest construction of the bounding region.
Denote
\equa{%
   \nvec{\varphi}=(\cos\varphi,\sin\varphi)
}
the unit vector, associated with the angle~$\varphi$.
A set of 12 points $P_1,\ldots,P_{12}$
and end tangents $\nvec{\tau_1},\,\nvec{\tau_{12}}$
in \RefFig[(a)]{Region} is the data to be interpolated.
In \RefFig[(b)]{Region} 12 circular arcs are traced as follows: 
arc  $P_1P_2$ of the curvature~$q_1$ passes through points $P_1$ and~$P_2$, 
matching at~$P_1$ given tangent~$\nvec{\tau_1}$;
circular arcs $P_1P_2P_3$,  $P_2P_3P_4$, $\ldots$, $P_{10}P_{11}P_{12}$,
with curvatures $q_2,\ldots,q_{11}$ pass
through triples of consecutive points;
arc $P_{11}P_{12}$ of the curvature~$q_{12}$ matches given tangent $\nvec{\tau_{12}}$ at the endpoint.
Thus we obtain, on each chord $P_jP_{j+1}$, two circular arcs, 
forming the lens.
Together these lenses form the sought for region.


\begin{figure}[t]
\centering%
\Pfig{.9\textwidth}{Region}{Construction of the bounding region for spiral interpolants (simple version)}%
\end{figure}

\subsection{Spline data}

For the given set of $N$ points denote $M\ieq N{-}1$ the number of chords;
denote also the length of \hbox{$j$-th} chord as $2c_j=\abs{{P_jP_{j+1}}}$,
and its angular direction as $\mu_j$:
\begin{align}
      &1\le j\le M{:}& &c_j=\dfrac12\sqrt{(x_{j+1}{-}x_j)^2+(y_{j+1}{-}y_j)^2},&&
          \cos\mu_j=\dfrac{x_{j+1}{-}x_j}{2c_j},
          \sin\mu_j=\dfrac{y_{j+1}{-}y_j}{2c_j};\nonumber\\
      &j\ieq0,\;j\ieq N{:}& &c_0 = c_N = 0,\:\mu_0=\tau_1,\:\mu_N=\tau_N.& &\label{AddPt}
\end{align}      

Definitions \eqref{AddPt} imitate boundary tangents,
as if two additional points, $P_0$ and $P_{N+1}$, were added, forming
two infinitesimal pseudo-chords, $P_0 P_1$ and $P_N P_{N{+}1}$, 
keeping specified directions.
Two-point arcs $P_1P_2$ and $P_{N-1}P_N$ can now be described by three-point formulas
\eqref{3pt},\eqref{XiEta} as $P_0P_1P_2$ and $P_{N-1}P_N P_{N+1}$.
Turning angles $\rho_j$ of the chord in every \hbox{$j$-th} node, $1\le j\le N$, 
and signed curvatures $q_j$ of directed circles $P_{j-1}P_jP_{j+1}$ can be defined by
\Equa{3pt}{%
      \setlength{\unitlength}{1pt}
      \parbox[c]{150pt}{\begin{picture}(150,35)%
         \put(0,0){\Infigw{150pt}{AuxFigRho}}
      \end{picture}}\quad
      \begin{array}{l}
        \rho_j=\mu_j{-}\mu_{j-1},\quad  q_{j}=\dfrac{\sin\rho_j}{d_{j} },\text{~~where}\\
        d_{j} =\dfrac{1}{2}\abs{{P_{j-1}P_{j+1}}}=\sqrt{c_{j-1}^2+2c_{j-1}c_j\cos\rho_j+c_j^2}\,.
    \end{array}   
}
Curvatures $q_j$ are expected to be close to unknown curvatures~$k_j$ in the nodes 
of the original curve.

\begin{figure}[t]
\centering%
\Pfig{.92\textwidth}{XiEta}{Definition of the angles $\alpha,\beta$, $\xi,\eta$ and
 illustration to the proof of Prop.\,1. Nodes 1,\,2,\,3,\,4 are those of interpolated spiral
(dotted); dashed line are circles of curvature at nodes 2,\,3.}%
\end{figure}

\RefFig[(a)]{XiEta} illustrates unknown tangents $\nvec{\alpha_j}$, $\alpha_j\ieq\tau_j{-}\mu_j$,
and $\nvec{\beta_j}$, $\beta_j\ieq\tau_{j+1}-\mu_j$, at the endpoints
of the \hbox{$j$-th} segment of the interpolated spiral,
measured with respect to the direction $\nvec{\mu_j}$ of the \hbox{$j$-th} chord. 
To estimate $\alpha_j,\,\beta_j$, the angles ${\xi_j}$ and ${\eta_j}$ are used (\RefFig[(b)]{XiEta}).
Vector $\nvec{\xi_j}$ is the tangent to the circular arc $P_{j-1}P_jP_{j+1}$ at $P_j$;
vector $\nvec{\eta_j}$ is the tangent to the arc $P_jP_{j+1}P_{j+2}$ at~$P_{j+1}$;
both $\xi_j$ and $\eta_j$ are also measured with respect to $\nvec{\mu_j}$:
\Equa{XiEta}{%
    \begin{array}{ll} 
          \sin\xi_j = \dfrac{-c_j\sin\rho_j}{d_{j} }={-}c_jq_{j},\quad\quad&
          \sin\eta_j= \dfrac{c_j\sin\rho_{j+1}}{d_{j+1} }= c_jq_{j+1}\,,\\
          \cos\xi_j =\dfrac{c_{j-1}{+}c_j\cos\rho_j}{d_{j} },\quad&
          \cos\eta_j=\dfrac{c_{j+1}{+}c_j\cos\rho_{j+1}}{d_{j+1} }\,,
    \end{array}\quad   1\le j\le M.\quad
}
Let $\nvec{\vartheta_j}$ denote the tangent to the circular arc $P_{j-1}P_jP_{j+1}$ at $P_j$,
measured in global coordinates.
Note that
\Equa{abij}{
   \aligned
          \tau_j=&\mu_{j-1}{+}\beta_{j-1}=\mu_j+\alpha_j &\So \beta_{j-1}&=\rho_j+\alpha_j, \\
     \vartheta_j=&\mu_{j-1}{+}\eta_{j-1}=\mu_j+\xi_j     &\So  \eta_{j-1}&=\rho_j+\xi_j.
   \endaligned
}

\subsection{Spiral arc in normalized position}

We consider spiral and circular arcs between two neighbouring nodes 
as curves, one-to-one projectable onto the subtending chord. 
Therefore they can be represented as functions $y(x)=f_j(x)$ in the
local coordinate system such that the \hbox{$j$-th} chord becomes the segment $[-c_j,c_j]$ 
of the local $x$-axis:
\Equa{Plotfx}{%
   y(\pm c_j)=0;\quad 
   y'(-c_j)=\tan\alpha_j,\;
   y'(c_j)=\tan\beta_j,\quad
   \kappa_j(x)=\frac{y''}{\left(1+{y'}^2\right)^{3/2}}  
}
is monotone curvature function.

The circular arc, 
resting on the segment $[-c,c]$  of the $x$-axis with the tangent vector 
$\nvec{\varphi}$ at the start point, is denoted as
\equa{%
      \setlength{\unitlength}{1pt}
      \parbox[c]{150pt}{\begin{picture}(150,28)%
         \put(0,0){\Infigw{150pt}{AuxArc}}
      \end{picture}}\quad
   A(x;c,\varphi)=\frac{(c^2-x^2)\sin\varphi}{c\cos\varphi+\sqrt{c^2-x^2\sin^2\varphi}},
   \quad \abs{x}\le c,\quad\abs{\varphi}\le\dfrac{\pi}{2}.
}

The spiral segment has tangent $\nvec{\alpha}$ and curvature $a$ at the startpoint
$A\ieq(-c,0)$, and tangent $\nvec{\beta}$ and curvature~$b$ at the endpoint $B\ieq(c,0)$.
Below two biarcs $AJB$ illustate this data. 
Assuming that the curvature~\eqref{Plotfx} is increasing, $a=\kappa(-c)<\kappa(c)=b$, the following relations are valid:\vspace{1\baselineskip}
\begin{subequations}\label{QVogt}
\begin{align}
    &\alpha+\beta>0\quad\left[\text{or~} \sgn(b{-}a)=\sgn(\alpha{+}\beta)\right];\label{VogtTh} \\[6pt]
    &ac<-\sin\alpha,\quad \sin\beta<bc;\label{acbc} \\[6pt]
    \setlength{\unitlength}{1pt}
    \parbox[b]{170pt}{\begin{picture}(170,1)
         \put(0,0){\Infigw{170pt}{BiarcABv}}
      \end{picture}}\qquad
    &A(x;c,-\beta)< f(x)<  A(x;c,\alpha),\quad \abs{x}\ilt c.\label{LensTh}
\end{align}
\end{subequations}
Eq.\,\eqref{VogtTh} is {\em modified Vogt's theorem}:
(see \cite{AKparab}, St.\,3). 
Eqs.\,\eqref{acbc} are commented in \cite{AKparab}, St.\,6.
Eq.\,\eqref{LensTh} is {\em lens theorem} (see \cite{AKparab}, St.\,5). 
These relations turn to equalities if the curvature is constant.

\section{Bound for a spiral spline}

Interpolation of a curve usually assumes that the neighbouring nodes are located rather closely.
The corresponding restrictions could be expressed in terms of sufficiently small
turning angles~$\rho_j$, \eg, $\abs{\rho_j}\ile\pi/2$.
We apply even more weak constraints,
\Equa{Lim180}{%
    c_{j{-}1}{+}c_j\cos\rho_j\ge 0\mbox{~~~and~~~}
    c_j{+}c_{j{-}1}\cos\rho_j\ge 0.
}
This requires each of two arcs, $P_{j-1}P_j$ and $P_jP_{j+1}$, of the circle $P_{j-1}P_jP_{j+1}$
not to exceed \Deg{180}. 
Three last examples below (red) violate these conditions.\nopagebreak[4]\\
\parbox[c]{\textwidth}{\Infigw{\textwidth}{AuxRho}}\vspace{\baselineskip}

Propositions 1 and 2 constitute Theorem~5 in \cite{AKinterpol}.
\begin{Prop}
The union of lenses, formed by arcs $A(x;c_j,-\eta_j)$ and $A(x;c_j,\xi_j)$
on every chord $P_jP_{j+1}$, $j=1,\ldots,M$, 
covers all possible spirals, matching given
data $\{P_1,\ldots, P_{M+1};\,\tau_1,\tau_{M+1}\}$.
\end{Prop}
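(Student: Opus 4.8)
The plan is to show that each segment of an arbitrary spiral interpolant stays inside the lens assigned to its chord, so that the union of lenses covers the entire spiral. Fix $j$ and consider the $j$-th spiral segment joining $P_j$ to $P_{j+1}$; put it in the normalized position of \eqref{Plotfx}, so it is a graph $y=f_j(x)$ over $[-c_j,c_j]$ with endpoint tangent angles $\alpha_j,\beta_j$ (measured from $\nvec{\mu_j}$) and monotone curvature. The lens theorem \eqref{LensTh} immediately gives $A(x;c_j,-\beta_j)<f_j(x)<A(x;c_j,\alpha_j)$, so the segment lies in the lens bounded by the two \emph{spiral} tangent directions $\alpha_j$ and $-\beta_j$. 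The whole difficulty is that $\alpha_j,\beta_j$ are unknown; what we can actually draw are the circular arcs with the \emph{known} angles $\xi_j$ and $-\eta_j$. So the real content is to prove the inclusion of the "true" lens in the "drawn" lens, which — since $A(x;c,\varphi)$ is monotone increasing in $\varphi$ for fixed $x\in(-c,c)$ — reduces to the two angular inequalities
\begin{equation*}
   \xi_j\ \ge\ \alpha_j,\qquad \eta_j\ \ge\ -\beta_j\qu

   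\text{equivalently}\quad \alpha_j\le\xi_j\ \text{ and }\ \beta_j\ge-\eta_j .
\end{equation*}

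To establish these I would argue at the node $P_j$, comparing the interpolated spiral with the two circular arcs meeting there. The key geometric fact is the modified Vogt theorem \eqref{VogtTh} applied to appropriate sub-configurations. Consider the circular arc $P_{j-1}P_jP_{j+1}$ (curvature $q_j$, tangent direction $\xi_j$ at $P_j$ w.r.t. $\nvec{\mu_j}$) and the spiral segment on the same chord starting at $P_{j-1}$ actually the cleaner route is: on the chord $P_{j-1}P_j$ the incoming spiral segment has some exit tangent at $P_j$; by \eqref{acbc} applied to that segment its curvature at $P_j$ dominates $q_{j-1}$-type quantities, and by monotonicity of the spiral's curvature the curvature value $k_j$ at $P_j$ is squeezed between what the left arc and the right arc prescribe. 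Translating the curvature comparison into a tangent-angle comparison via the relations \eqref{abij} (which tie $\beta_{j-1}=\rho_j+\alpha_j$ and $\eta_{j-1}=\rho_j+\xi_j$) turns $\alpha_j\le\xi_j$ into a statement purely about one segment, namely $\beta_{j-1}\le\eta_{j-1}$, i.e. the spiral's exit angle on chord $j{-}1$ does not exceed the circular arc's exit angle — and that is again exactly \eqref{VogtTh}/\eqref{acbc} for the segment on chord $j{-}1$, using that replacing the spiral by the circle through $P_{j-1},P_j,P_{j+1}$ only increases the exit tangent because the circle has the larger terminal curvature. A symmetric argument at $P_{j+1}$ gives $\beta_j\ge-\eta_j$. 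Finally one checks the two boundary chords ($j=1$ and $j=M$) are covered by the same formulas through the pseudo-chord convention \eqref{AddPt}, so no separate treatment is needed.

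The main obstacle I anticipate is precisely the bookkeeping of signs and directions in the chain $\alpha_j\le\xi_j \iff \beta_{j-1}\le\eta_{j-1}$: one must be careful that "increasing curvature" on the spiral and the orientation conventions for $\xi,\eta$ (note $\sin\xi_j=-c_jq_j$, with the minus sign) are consistent, and that the condition \eqref{Lim180} is what guarantees $|\xi_j|,|\eta_j|\le\pi/2$ so that $A(\cdot;c,\xi_j)$ and $A(\cdot;c,-\eta_j)$ are well-defined genuine circular arcs and the monotonicity of $A$ in its angular argument is applicable on the whole open chord. Once the single-segment comparison "spiral exit angle $\le$ circular-arc exit angle" is isolated and proved from \eqref{VogtTh}–\eqref{acbc}, the assembly into the union-of-lenses statement is routine.
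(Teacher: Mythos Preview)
Your overall architecture matches the paper exactly: reduce to one chord, apply the lens theorem~\eqref{LensTh} to get the inner lens $(\alpha_j,-\beta_j)$, and then enlarge to the computable lens $(\xi_j,-\eta_j)$ by proving the angular inequalities, using monotonicity of $A(x;c,\varphi)$ in~$\varphi$. That is precisely the chain~\eqref{Bound1}--\eqref{Enclose} of the paper.

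The gap is in the heart of the matter, the inequality $\alpha_j\le\xi_j$ (equivalently $\beta_{j-1}\le\eta_{j-1}$ via~\eqref{abij}). Your justification ``the circle through $P_{j-1},P_j,P_{j+1}$ has the larger terminal curvature, hence the larger exit angle, and that is~\eqref{VogtTh}/\eqref{acbc}'' does not go through. From~\eqref{acbc} on chord $j{-}1$ you get $\sin\beta_{j-1}\le k_jc_{j-1}$, while $\sin\eta_{j-1}=q_jc_{j-1}$; so you would need $k_j\le q_j$, which is \emph{not} available (indeed, later in the paper one only obtains $q_{j-1}\le k_j\le q_{j+1}$, and even that uses Prop.~1). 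Vogt's theorem~\eqref{VogtTh} compares $\alpha$ with $-\beta$ on one chord, not $\beta_{j-1}$ with $\eta_{j-1}$ across two curves sharing only endpoints. In short, the single-chord inequalities~\eqref{VogtTh}--\eqref{acbc} are not strong enough to force $\alpha_j\le\xi_j$; a genuinely two-chord fact is needed.

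The paper supplies that fact: a spiral with increasing curvature crosses its osculating circle at $P_j$ from right to left, so $P_{j-1}$ lies to the right of that circle and $P_{j+1}$ to the left. Hence the three-point circle $P_{j-1}P_jP_{j+1}$, which shares $P_j$ with the osculating circle, must also cross it from right to left there, forcing its tangent direction $\xi_j$ at $P_j$ to exceed $\alpha_j$. The symmetric argument at $P_{j+1}$ gives $\beta_j\le\eta_j$ (note: your stated target ``$\beta_j\ge-\eta_j$'' is a sign slip; the needed inequality is $-\beta_j\ge-\eta_j$). Once you invoke this osculating-circle crossing property, the rest of your outline is correct.
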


\begin{proof}[\bf Proof]
In the below proofs only the case of increasing curvature is considered.
Denote $f_j(x)$ the \hbox{$j$-th} segment of original (or interpolated) spiral.
Inequalities near $f_j(x)$ in the chain
\Equa{Bound1}{%
        A(x;c_j,-\eta_j)\le A(x;c_j,-\beta_j)\le f_j(x)\le A(x;c_j,\alpha_j)\le A(x;c_j,\xi_j)
}
constitute the lens theorem for a spiral arc (Eq.\,\eqref{LensTh} and \RefFig[(a)]{XiEta});
non-strict form ``${\le}$'' takes into account the possibilty of constant curvature.
Outer inequalities express the statement of Prop.\,1 for every \hbox{$j$-th} chord.
To show that  the lens, defined by $(\alpha_j,{-}\beta_j)$, is enclosed by the lens $(\xi_j,{-}\eta_j)$,
we have to prove
\Equa{Enclose}{%
    \xi_j\ge\alpha_j\ge -\beta_j\ge -\eta_j.  
}
The inner inequality, \GE{\alpha{+}\beta}, is Vogt's theorem~\eqref{VogtTh} for increasing (${>}$)
or constant (${=}$) curvature of the spiral arc $f_j(x)$. 
Note also that, by construction,
\Equa{FirstLast}{%
   \alpha_1\ieq\xi_1,\qquad \beta_{M}\ieq\eta_{M}.
}
To illustrate proof of \eqref{Enclose}, the circle of curvature of the spiral at node~$j$ ($j\ieq2$) 
is traced by dashed line in \RefFig[(c)]{XiEta}; 
tangent $\nvec{\alpha_2}$ to the spiral at~$P_2$ is also tangent to this circle.
It is known that
a spiral arc with increasing curvature
intersects the osculating circle from right to left. 
The point $P_{1}$ is located to the right of the circle, and the point $P_{3}$ to the left.
Therefore the circular arc, passing through points $P_{1,2,3}$, 
intersects the osculating circle from right to left,
and $\xi_2\igt\alpha_2$. The case $\xi_2\ieq\alpha_2$ occurs if the segment $P_{1}P_2P_{3}$ 
of the original spiral is coincident with the osculating circle. 
Similarly, from mutual position of the circular arc through $P_{2,3,4}$, 
and the osculating circle at $P_3$ (\RefFig[(d)]{XiEta}) we deduce $\eta_2\ge\beta_2$. 
\end{proof}

The width $\Diam{}$ of the united region is
\equa{
   \Diam{}\ieq\max\limits_{1\le j\le M}\Diam{j},\qquad 
   \Diam{j}=c_j\abs{\tan\frac{\xi_j}2+\tan\frac{\eta_j}2}\approx \frac12 c_j^2\abs{q_j-q_{j+1}}.
}
Increasing $N$ decreases  $c_j$ and $\abs{q_j{-}q_{j+1}}\approx\abs{k_j{-}k_{j+1}}$,
which means cubic convergence of the width~$\Diam{}$ with~$N$.
\RefFig[(c)]{Region}, showing the region for 16 points, illustrates this effect.

\begin{Prop}
If  turning angles $\rho_j$ obey constraints~\eqref{Lim180},
the sequence $q_j$, $j=1,\ldots,N$, of $3$-point curvatures is monotone.
\end{Prop}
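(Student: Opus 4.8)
The plan is to argue under the same standing assumption used in the proof of Proposition~1 --- that the given data is matched by some spiral with increasing curvature, the decreasing case being symmetric --- and to conclude that then $q_1\le q_2\le\dots\le q_N$. The engine of the argument is already visible in \eqref{XiEta}: on the $j$-th chord the two neighbouring $3$-point curvatures enter as $\sin\xi_j=-c_jq_j$ and $\sin\eta_j=c_jq_{j+1}$, so, since $c_j>0$ for $1\le j\le M$, the desired inequality $q_j\le q_{j+1}$ is nothing but $\sin\xi_j+\sin\eta_j\ge0$. The whole matter thus reduces to comparing the two angles $\xi_j$ and $-\eta_j$.

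First I would observe that the restrictions \eqref{Lim180} are exactly what fixes the signs of $\cos\xi_j$ and $\cos\eta_j$: by \eqref{XiEta} the numerators of these cosines are $c_{j-1}+c_j\cos\rho_j$ and $c_{j+1}+c_j\cos\rho_{j+1}$, which are respectively the left-hand side of the first inequality in \eqref{Lim180} written at node $j$ and that of the second inequality in \eqref{Lim180} written at node $j+1$. Hence $\cos\xi_j\ge0$ and $\cos\eta_j\ge0$, so $\xi_j$, $\eta_j$, and therefore $-\eta_j$, all lie in $[-\pi/2,\pi/2]$. Next I would invoke the chain \eqref{Enclose}, established in the proof of Proposition~1 for every chord $1\le j\le M$ (the end chords $j=1$ and $j=M$ being covered by \eqref{FirstLast} together with the conventions $c_0=c_N=0$); its outermost link is exactly $\xi_j\ge-\eta_j$. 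Finally, since $\sin$ is increasing on $[-\pi/2,\pi/2]$, the inequality $\xi_j\ge-\eta_j$ yields $\sin\xi_j\ge-\sin\eta_j$, that is $c_j(q_{j+1}-q_j)\ge0$; dividing by $c_j>0$ gives $q_j\le q_{j+1}$, and letting $j$ range over $1,\dots,M$ yields the claimed monotonicity of $q_1,\dots,q_N$.

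I do not expect a serious obstacle, since the hard analytic input --- Vogt's theorem \eqref{VogtTh} and the osculating-circle comparison, already packed into \eqref{Enclose} --- was supplied when proving Proposition~1. The one delicate point is conceptual rather than computational: \eqref{Enclose} is an ordering of \emph{angles}, and to turn it into an ordering of \emph{curvatures} one must know that these angles never leave the interval on which $\sin$ is monotone; this is precisely the service rendered by \eqref{Lim180}, which is also the reason both of its inequalities are needed, one controlling $\xi_j$ and the other $\eta_j$. Two routine verifications remain: that the denominators $d_j$ appearing in \eqref{XiEta} stay positive under \eqref{Lim180}, failing only in the degenerate case $P_{j-1}=P_{j+1}$ (a \Deg{180} turn with $c_{j-1}=c_j$); and that at the ends $d_1=c_1$ and $d_N=c_{N-1}$ are positive by \eqref{AddPt}. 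Finally it is worth recording that the hypothesis genuinely includes the existence of a spiral interpolant: \eqref{Lim180} alone is met by zig-zag data whose $3$-point curvatures alternate in sign, so Proposition~2, just like Proposition~1, is meant to be read under the standing assumption of the Theorem~5 referred to above.
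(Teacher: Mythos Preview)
Your proof is correct and follows the paper's own argument essentially verbatim: use \eqref{Lim180} together with \eqref{XiEta} to confine $\xi_j,\eta_j$ to $[-\pi/2,\pi/2]$, invoke the outer inequality $-\eta_j\le\xi_j$ of \eqref{Enclose}, and apply the monotonicity of $\sin$ to obtain $q_j\le q_{j+1}$. Your added remarks --- that \eqref{Lim180} acts by forcing $\cos\xi_j,\cos\eta_j\ge0$, and that the statement tacitly presupposes the spiral hypothesis underlying \eqref{Enclose} --- are accurate expansions of what the paper leaves implicit.
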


\begin{proof}[\bf Proof]
Comparison of~\eqref{Lim180} and~\eqref{XiEta} yields $\abs\xi_j\le \frac{\pi}2$, 
and $\abs\eta_j\le \frac{\pi}2$; so,
\equa{%
   -\eta_j\stackrel{\eqref{Enclose}}\le\xi_j\So
   {-\sin\eta_j} \le  \sin\xi_j
   \quad \stackrel{\eqref{XiEta}}\Longrightarrow\quad
    -q_{j+1} c_j \le -q_jc_j \So
     q_{j+1}\ge q_j. \eqno\qed
}\def\qed{}
\end{proof}



\section{Bound for spline with distinguished vertices (piecewise spiral spline)}

Consider the case when the sequence of 3-point curvatures is not monotone,
although conditions \eqref{Lim180} are satisfied.
This means that the original curve is not a spiral. Assume that 
\begin{subequations}\label{Condab}
\begin{align}
 &\text{\em nodes with minimal/maximal curvature $q_j$ are exactly the vertices of the curve;}\label{Conda}\\[-2pt]
 &\text{\em there is at least one node between two neighbouring vertices.}\label{Condb}
\end{align}
\end{subequations}
Under these assumptions there is also a simple way to construct the bounding region. 
In \RefFig{Ellipse} the ellipse is subdivided by 13 points.
Analyzing the sequence of 3-point curvatures and assuming~\eqref{Condab},
we detect vertices at $P_1,P_5,P_8,P_{10}$.
The 3-point arcs $P_{j-1}P_jP_{j+1}$ are traced only for the triples,
wherein the midpoint $P_j$ is not a vertex (\RefFig[(a)]{Ellipse}).
As before, we obtain the pair of boundary arcs on chords
2--3, 3--4, 6--7, 11--12, 12--13.
We have only one circular arc on each chord, adjacent to a vertex.
Lacking arcs~\eqref{YesNo} are shown in \RefFig[(b)]{Ellipse} by bold (red) lines. 
The angles in~\eqref{YesNo}  are such that, e.\,g., bold arc 1--2 shares tangent at $P_2$ with the arc 2--3--4;
tangent to the arc 2--3--4 at $P_4$ is matched by the 2-point arc 4--5, and so on.

The circle of curvature of the ellipse at the point $P_8$ (\RefFig[(a)]{Ellipse}) shows that
the distances  $\abs{P_7P_8}$, $\abs{P_8P_9}$ are too big,
compared to the local radius of curvature. As consequence, the estimated
bounding regions 7--8 and 8--9 are too wide. 

\begin{Prop}
Under assumptions \textrm{\eqref{Condab}} the pair of boundary arcs for the point set with distinguished vertices is defined as follows:
\settowidth{\tmplength}{$ A(x;c_j,\eta_{j+1}-\rho_{j+1}),\quad$}
\Equa{YesNo}{%
  \begin{array}{ccccl}
    \text{vertex at $P_j$:} && \text{vertex at $P_{j+1}$:} && \text{boundary arcs for the chord $P_jP_{j+1}$:}\\
        \text{\it yes}&&\text{\it no}&& \makebox[\tmplength][l]{$A(x;c_j,-\eta_j),$}  A(x;c_j,-\xi_{j-1}-\rho_j);\\
        \text{\it no}&&\text{\it yes}&& \makebox[\tmplength][l]{$A(x;c_j,\eta_{j+1}-\rho_{j+1}),$} A(x;c_j,\xi_j);\\
        \text{\it no}&&\text{\it no}&&  \makebox[\tmplength][l]{$A(x;c_j,-\eta_j),$} A_j(x;c_j,\xi_j).
   \end{array}        
}
\end{Prop}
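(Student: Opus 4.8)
The plan is to treat the three rows of \eqref{YesNo} separately, reducing each to the already-established chain \eqref{Bound1} and to the enclosure inequalities \eqref{Enclose}, together with the bookkeeping relations \eqref{abij}. The key observation is that at an interior (non-vertex) node the curvature $k$ of the interpolated piecewise spiral is still monotone on the two adjacent segments, so the lens theorem \eqref{LensTh} and Vogt's theorem \eqref{VogtTh} apply verbatim there; only at a vertex does monotonicity fail, and that is precisely where one of the two circular arcs of the naive construction is missing and must be replaced by a continuation of the neighbouring 3-point arc.

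First I would dispose of the third row (\emph{no}/\emph{no}): neither endpoint of the chord $P_jP_{j+1}$ is a vertex, so the interpolated spiral segment $f_j$ has monotone curvature, $\alpha_j,\beta_j$ are well defined, and \eqref{Bound1} gives directly $A(x;c_j,-\eta_j)\le f_j(x)\le A(x;c_j,\xi_j)$; this is nothing but Prop.\,1 applied locally, and it needs no new argument. Next I would treat the first row (\emph{yes}/\emph{no}), where $P_j$ is a vertex and $P_{j+1}$ is not. The upper bound $A(x;c_j,\xi_j)$ is unchanged, coming from $\alpha_j\le\xi_j$ exactly as in the proof of Prop.\,1 (the argument with the osculating circle at the non-special endpoint still works). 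For the lower bound, the arc $A(x;c_j,-\beta_j)$ from \eqref{Bound1} must be bounded below by $A(x;c_j,-\xi_{j-1}-\rho_j)$; using \eqref{abij} we have $\beta_{j-1}=\rho_j+\alpha_j$ and $\eta_{j-1}=\rho_j+\xi_{j-1}$, and on the segment $[-c_{j-1},c_{j-1}]$ ending at the vertex $P_j$ the curvature of $f_{j-1}$ is monotone, so Vogt/enclosure gives $\alpha_{j-1}\le\xi_{j-1}$ hence, combined with continuity of the tangent at $P_j$ (so $\beta_{j-1}=\alpha_j$ up to the chord turn), the required comparison $\beta_j\le\xi_{j-1}+\rho_j$ after re-expressing everything relative to $\nvec{\mu_j}$. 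The second row (\emph{no}/\emph{yes}) is the mirror image: reflect $x\mapsto-x$ (equivalently run the chord backwards), which swaps $\alpha_j\leftrightarrow-\beta_j$, $\xi_j\leftrightarrow-\eta_j$, and the roles of $j-1$ and $j+1$, and the first-row argument applies.

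The main obstacle — and the only place where care is genuinely required — is the sign/orientation bookkeeping at a vertex: across a vertex the curvature changes monotonicity, so one must be sure that the relevant inequality from \eqref{Enclose} is being applied on the correct side (the segment on which curvature is still monotone) and that the tangent-matching relations \eqref{abij} are used with the right indices when the missing circular arc is the continuation \emph{into} versus \emph{out of} the vertex. Concretely, one has to verify that the angle $\xi_{j-1}+\rho_j$ (resp.\ $\eta_{j+1}-\rho_{j+1}$) is exactly the direction, measured against $\nvec{\mu_j}$, of the tangent at $P_j$ (resp.\ $P_{j+1}$) of the 3-point arc lying on the non-vertex side, and that this tangent is the one shared with the interpolated segment $f_j$ by $C^1$-continuity at that node. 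Once that identification is made, the desired enclosure of the true boundary arc $A(x;c_j,-\beta_j)$ (resp.\ $A(x;c_j,\alpha_j)$) by the listed arc is again an instance of \eqref{Enclose} on the monotone side, and the lens theorem \eqref{LensTh} finishes the two-sided bound. I would also remark that assumption \eqref{Condb} (a node between consecutive vertices) is what guarantees that the indices $j-1$, $j+1$ appearing in \eqref{YesNo} are themselves non-vertices, so the 3-point arcs being continued are the genuine boundary arcs of the already-treated third row.
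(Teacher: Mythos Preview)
Your treatment of the \emph{no}/\emph{no} row is fine and matches the paper. But in the \emph{yes}/\emph{no} row you have the roles of the two arcs reversed, and this is not just a bookkeeping slip: it is the heart of the argument. You write that ``the upper bound $A(x;c_j,\xi_j)$ is unchanged, coming from $\alpha_j\le\xi_j$ exactly as in the proof of Prop.\,1 (the argument with the osculating circle at the non-special endpoint still works).'' But $\xi_j$ is the tangent to the circle $P_{j-1}P_jP_{j+1}$ at $P_j$, and the inequality $\xi_j\ge\alpha_j$ in the proof of Prop.\,1 uses the osculating circle at $P_j$ together with monotonicity of curvature on the whole stretch $P_{j-1}P_jP_{j+1}$. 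When $P_j$ is the vertex, that monotonicity fails, and with it the inequality $\xi_j\ge\alpha_j$. What \emph{does} survive is $\eta_j\ge\beta_j$, since $\eta_j$ comes from the circle $P_jP_{j+1}P_{j+2}$ and the osculating circle at the non-vertex endpoint $P_{j+1}$. So in the \emph{yes}/\emph{no} case the retained arc is $A(x;c_j,-\eta_j)$, and it is the $\xi_j$-bound that must be replaced by $A(x;c_j,-\xi_{j-1}-\rho_j)$, exactly as the table \eqref{YesNo} records.

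Correspondingly, the replacement bound is obtained not from $\alpha_{j-1}\le\xi_{j-1}$ as you wrote, but from the inequality for $\beta_{j-1}$ on the \emph{previous} chord $P_{j-1}P_j$: curvature is monotone there (say decreasing into a minimum at $P_j$), so the reversed enclosure gives $\beta_{j-1}<-\xi_{j-1}$, and then \eqref{abij} in the form $\alpha_j=\beta_{j-1}-\rho_j$ yields $\alpha_j<-\xi_{j-1}-\rho_j$. This is precisely the paper's derivation. Your mirror-symmetry remark for \emph{no}/\emph{yes} is correct in spirit, but once you fix the \emph{yes}/\emph{no} case the symmetry will give the correct pair $\{A(x;c_j,\xi_j),\,A(x;c_j,\eta_{j+1}-\rho_{j+1})\}$ rather than the swapped one your current plan would produce.
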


\begin{figure}[t]
\centering%
\Pfig{1\textwidth}{Ellipse}{Constructing bounds for curves with distinguished vertices
(dotted line is the original ellipse)}%
\end{figure}


\begin{proof}[\bf Proof]
The case {\em no/no} is copied from \eqref{Bound1}.
Condition \eqref{Condb} excludes the case {\em yes/yes}.

First, we note that for the closed curve there is no need to specify the boundary tangents $\tau_1$,~$\tau_N$: 
to define the turning angles $\rho_1,\,\rho_N$
definitions \eqref{AddPt} should be replaced by $P_0=P_N$ and $P_{N+1}=P_1$, 
assuming $M\ieq N$ (the number of chords is equal to the number of nodes).

Consider the segment (3-4-\underline{5}-6-7) with curvature decreasing in
(3-4-\underline{5}) to the vertex at node~\underline{5} (minimum), 
and then inreasing in (\underline{5}-6-7).
Inequalities \eqref{Enclose} look like
\equa{
       P_4P_5{:}\;\;  \xi_4<\alpha_4<-\beta_4<?\,,\qquad  
       P_5P_6{:}\;\; -\eta_5<-\beta_5<\alpha_5<\text{?`}\,,
}
and yield $-\alpha_5<\eta_5$, and $\beta_4<-\xi_4$. 
Missed right-most restrictions can now be obtained as follows: 
\equa{%
   -\beta_4\Eqref{abij}-\alpha_5-\rho_5<\eta_5-\rho_5;\qquad
   \alpha_5=\beta_4-\rho_5<-\xi_4-\rho_5.
}
Similar reasoning yields similar bounds for the vertex of maximal curvature;
e.\,g., for vertex~8
\equa{
       P_7P_8{:}\;\;  \xi_7>\alpha_7>-\beta_7>\eta_8{-}\rho_8\,;\qquad  
       P_8P_9{:}\;\; -\eta_8<-\beta_8<\alpha_8<-\xi_7{-}\rho_8\,. 
}
Together they look like (with upper signs for a vertex-minimum)
\equa{
    \underbrace{\xi_j\lessgtr \alpha_j\lessgtr -\beta_j\lessgtr \eta_{j+1}-\rho_{j+1}}_%
{\text{chord~}P_jP_{j+1},\text{~vertex at~}P_{j+1}}\,;\qquad  
    \underbrace{-\eta_j\lessgtr -\beta_j\lessgtr \alpha_j\lessgtr -\xi_{j-1}-\rho_j}_%
{\text{chord~}P_jP_{j+1},\text{~vertex at~}P_{j}}\,.  
}
This yields bounds~\eqref{YesNo} for {\em yes/no} and {\em no/yes} cases.
\end{proof}

Note that the discrete curvature plot may look like
\setlength{\unitlength}{1pt}
\begin{picture}(100,14)
\qbezier(0,5)(3,14)(14,14)
\qbezier(14,14)(25,14)(35,14)
\qbezier(35,14)(40,14)(45,7)
\qbezier(45,7)(50,0)(55,0)
\qbezier(55,0)(70,0)(80,0)
\qbezier(80,0)(90,0)(99,7)
\put(0,7){\circle*{3}}
\put(8,13){\circle*{3}}
\put(22,14){\circle*{3}}
\put(33,14){\circle*{3}}
\put(45,7){\circle*{3}}
\put(55,0){\circle*{3}}
\put(65,0){\circle*{3}}
\put(75,0){\circle*{3}}
\put(85,0){\circle*{3}}
\put(99,7){\circle*{3}}
\end{picture},
including the segments of constant curvature, which can be named ``extended vertices''.
Any node of the extended vertex can be chosen as the vertex 
to apply the above described algorithm.

\section{Narrowing the bounding region for spiral splines }


\RefFig{Forced} illustrates the strengthened version
of  Prop.\,1: some of the bounding arcs are replaced by biarcs.
As the consequence, the bound for the whole region
turned into a pair of smooth curves, intersecting at the nodes.

\begin{figure}[t]
\centering%
\Pfig{1\textwidth}{Forced}{Simple bounding region (dashed), and narrowed bounding region (solid lines);
biarc boundaries are shown with an arrow at the joint.}%
\end{figure}

\begin{figure}[t]
\centering%
\Pfig{\textwidth}{Collinear}{Regions for data with 4 cocircular (collinear) points}%
\end{figure}

If 4 points $P_{j-1},\ldots,P_{j+2}$ are cocircular (e.\,g., collinear), we obtain $q_j\ieq q_{j+1}$,
and zero width \EQ{\Diam{j}}. Any spiral, matching such data, 
includes arc $P_{j-1}P_jP_{j+1}P_{j+2}$ of constant curvature~$q_j$.
\quo{Simple} construction, defined by~\eqref{Bound1}, yields the region of zero width
on the segment $P_{j}P_{j+1}$.
Strengthened version~\eqref{Bound2} returns zero width
on the whole segment $P_{j-1}P_jP_{j+1}P_{j+2}$
Example in \RefFig{Collinear} illustrates this situation.\medskip

To describe the narrowed region (Prop.\,5) we first introduce notation for biarc curves.
Because all involved biarcs are one-to-one projectable
onto the chord, they can be considered as functions $B(x;\ldots)$ in the
local coordinate system~\eqref{QVogt}.
The first circular arc $AJ$ of a biarc in~\eqref{QVogt}
has tangent $\nvec{\alpha}$ and curvature $a$ at the startpoint
$A\ieq(-c,0)$, and is smoothly continued at the join point~$J$ by the second arc $JB$ of curvature~$b$
to the endpoint $(c,0)$ with end tangent $\nvec{\beta}$.
The condition of tangency of arcs $AJ$ and $JB$ looks like
\Equa{DefQ}{%
    (ac+\sin\alpha)(bc-\sin\beta)+\sin^2\omega=0,  
    \quad\text{where}\quad \omega=\dfrac{\alpha{+}\beta}{2}
}
is the angular half-width of the associated lens.
Details and reference formulae
for a family of biarcs
are given in \cite{Bilens}.
\begin{figure}[t]
\centering%
\Pfig{\textwidth}{BiarcDeg}{Defining \quo{degenerate} biarcs \eqref{BiarcInf} and \eqref{BiarcZero};
lens boundaries are shown dashed}%
\end{figure}

It is well known that, fitting two-point G$^1$ data (tangents at the endpoints) with a biarc,
we have one degree of freedom: one curvature, either~$a$ or~$b$, can be selected,
the other should be defined from condition of tangency~\eqref{DefQ}.
We use the following notation for biarc curves:
\begin{description}
\item[] $B_1(x;c,\alpha,\beta,a)$ for biarc, matching end tangents $\nvec{\alpha},\,\nvec{\beta}$, and the start curvature~$a$;
\item[] $B_2(x;c,\alpha,\beta,b)$ for a biarc, matching end tangents $\nvec{\alpha},\,\nvec{\beta}$, and the end curvature~$b$;
\item[] $B_0(x;c,\alpha,\beta,p)$, $p\in[0;\infty]$, for a family of {\em short biarcs} \cite{Bilens}
with curvatures
\Equa{abp}{%
\hspace*{-3em}
     a=-\frac1c\left(\sin\alpha - \frac1p \sin\omega\right),\;
     b= \frac1c\left(\sin\beta +  p\sin\omega\right)\;
     \left[p=-\frac{\sin\omega}{ac+\sin\alpha}=\frac{bc-\sin\beta}{\sin\omega}\right].
}
\end{description}

Now define \textit{degenerate biarcs} as follows.
For $\alpha+\beta\gtrless 0$, and either $p=0$ or $p=\infty$
\begin{subequations}\label{BiarcsDeg}
\Equa{BiarcInf}{%
 \aligned%
   &B_0(x;c,\alpha,\beta,0)=
     \lim\limits_{p\to0}B_0(x;c,\alpha,\beta,p)=B_1(x;c,\alpha,\beta,\mp\infty)=A(x;c,-\beta);\\
   &B_0(x;c,\alpha,\beta,\infty)=
     \lim\limits_{p\to{\infty}}B_0(x;c,\alpha,\beta,p)=B_2(x;c,\alpha,\beta,\pm\infty)=A(x;c,\alpha).
  \endaligned
}
For $\alpha+\beta= 0$:
\Equa{BiarcZero}{%
    B_0(x;c,\gamma,-\gamma,p)=
       \lim\limits_{\omega\to0}B_0(x;c,\underbrace{\omega{+}\gamma}_{\alpha}
                                      ,\underbrace{\omega{-}\gamma}_{\beta},p)=A(x;c,\gamma).
}
\end{subequations}
\RefFig[(a)]{BiarcDeg} illustrates definitions \eqref{BiarcInf}:
when, e.\,g., the join point tends to startpoint~$A$, the first curvature~$a$ tends to 
infinite impulse of curvature at $A$, the first subarc of a biarc vanishes;
simultaneously the second arc tends to the lens boundary.
In \RefFig[(b)]{BiarcDeg} the case $\alpha{+}\beta \ieq 2\omega\to 0$ \eqref{BiarcZero} is illustrated
with lenses, narrowing down in width.

\begin{Prop}
Consider a spiral arc $y\ieq f(x)$  
with increasing curvature  $\kappa(x)$.
If boundary angles $\alpha,\beta$ and curvature fall in ranges
\Equa{T4cond}{%
   \alpha'\le\alpha\le\alpha'',\quad  \beta'\le\beta\le\beta'',
   \quad -\infty\le a\le \kappa(x) \le b\le{+\infty},
}
than the curve is bounded by
\Equa{T4}{%
 \aligned
   &\text{if~}\alpha'{+}\beta''\ge0{:}\quad &  B_1(x;c,\alpha',\beta'',a)\le &f(x);\\
   &\text{if~}\alpha''{+}\beta'\ge0{:}\quad & &f(x) \le B_2(x;c,\alpha'',\beta',b).
 \endaligned  
}
\end{Prop}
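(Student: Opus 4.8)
The plan is to prove the two inequalities in \eqref{T4} separately, by reducing each one to the lens theorem \eqref{LensTh} together with the monotonicity properties of the biarc family $B_0(x;c,\alpha,\beta,p)$ in its parameters. I will carry out the lower bound $B_1(x;c,\alpha',\beta'',a)\le f(x)$; the upper bound is symmetric (reflect $x\mapsto -x$, which swaps $\alpha\leftrightarrow\beta$, $a\leftrightarrow b$, and turns ``lower'' into ``upper''). So assume $\alpha'+\beta''\ge 0$.

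First I would isolate the role of the endpoint data. By the lens theorem applied to $f$ itself, $f(x)\ge A(x;c,-\beta)$, and since $\beta\le\beta''$ and $A(x;c,\cdot)$ is (for fixed $x$ with $|x|<c$) monotone in its angular argument, $A(x;c,-\beta)\ge A(x;c,-\beta'')$. Likewise I want a comparison that brings in $\alpha'$ and the curvature bound $a\le\kappa(x)$. The key sub-claim is a \emph{monotonicity-in-parameters} statement for the boundary biarc: along the one-parameter family $B_0(x;c,\alpha',\beta'',p)$, $p\in[0,\infty]$, running from $A(x;c,-\beta'')$ at $p=0$ to $A(x;c,\alpha')$ at $p=\infty$ (this is exactly \eqref{BiarcInf}), the curve moves monotonically upward in $p$, and the start curvature $a(p)=-\frac1c(\sin\alpha'-\frac1p\sin\omega)$ from \eqref{abp} is monotone in $p$ as well. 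Hence choosing $p$ so that the start curvature of the biarc equals the prescribed lower curvature bound $a$ gives $B_1(x;c,\alpha',\beta'',a)=B_0(x;c,\alpha',\beta'',p(a))$, and the claim becomes: a spiral with $\kappa(x)\ge a$ at the start, endpoint tangents in the stated ranges, lies above this particular biarc.

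For that final comparison I would argue as in the proof of the lens theorem, but with the biarc $B_1(x;c,\alpha',\beta'',a)$ playing the role of the inner circular boundary. At the start point $(-c,0)$ the biarc has tangent angle $\alpha'\le\alpha$ and curvature exactly $a\le\kappa(-c)$; thus $f$ starts above and curves away from the first sub-arc no slower, so $f$ stays above the first sub-arc of the biarc up to the join $J$. Past $J$ the second sub-arc of the biarc is a circle with tangent angle $\beta''$ at the endpoint; running the lens-theorem argument backward from the endpoint $(c,0)$, where $f$ has tangent angle $\beta\le\beta''$, shows $f$ lies above that circular arc as well. The point where one switches from ``first sub-arc'' to ``second sub-arc'' needs a short convexity/monotone-curvature bookkeeping argument to check the two one-sided comparisons patch together at $J$ without $f$ dipping below; this uses $\alpha'+\beta''\ge 0$ (so that the associated lens is non-degenerate and the biarc is genuinely projectable, $|\omega|<\pi/2$) and the inequalities \eqref{acbc} for the spiral arc.

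The main obstacle I expect is precisely the monotonicity-in-$p$ of the biarc family and the patching at the join $J$: one must verify that for fixed $x$ the function $p\mapsto B_0(x;c,\alpha',\beta'',p)$ is monotone, and that the spiral's excursion above the biarc cannot fail somewhere strictly between $-c$ and $c$ even though the biarc itself is only $G^1$ (curvature jumps at $J$). I would handle this by comparing $f$ with the \emph{two} circular arcs comprising the biarc on the two sub-intervals separately, using the lens theorem / osculating-circle crossing argument of Prop.\,1 on each piece, and then checking that at $J$ the spiral lies on the correct side because its curvature there is squeezed between $a$ and $b$ and its tangent direction is squeezed between $\alpha$ and $\beta$ — the same squeezing that defines the biarc's join. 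All of this machinery is quoted from \cite{Bilens} in the paper, so the write-up can be kept short; the non-strict ``$\le$'' everywhere absorbs the constant-curvature degenerate cases covered by \eqref{BiarcsDeg}.
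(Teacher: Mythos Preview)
Your route diverges from the paper's, and the place you yourself flag as ``the main obstacle'' --- the patching at the join $J$ --- is a genuine gap, not just bookkeeping.

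The paper proceeds in two decoupled steps. First it invokes the \emph{bilens theorem} (\cite[Th.\,3]{AKinterpol}, \cite[Th.\,1]{Bilens}) with the spiral's \emph{actual} boundary angles $\alpha,\beta$: this already gives
\[
   B_1(x;c,\alpha,\beta,a)\le f(x)\le B_2(x;c,\alpha,\beta,b).
\]
Second, it passes from $(\alpha,\beta)$ to the extremal angles by monotonicity of biarcs in their angular parameters:
\[
   B_1(x;c,\alpha',\beta''\!,a)\le B_1(x;c,\alpha,\beta''\!,a)\le B_1(x;c,\alpha,\beta,a)\le f(x),
\]
and symmetrically for the upper bound. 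Each biarc-vs-biarc inequality involves two biarcs that share one endpoint angle and the relevant curvature, hence share one entire sub-arc; the remaining comparison is between two circular arcs and is elementary (``by inspecting the involved circular arcs'').

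Your plan instead attacks $f$ versus $B_1(x;c,\alpha',\beta''\!,a)$ directly, sub-arc by sub-arc. The first-sub-arc comparison (``starts above, curves away no slower'') is informal but repairable. The second-sub-arc comparison is the problem: you appeal to the lens theorem \eqref{LensTh} ``backward from $(c,0)$'', but that theorem compares a spiral to a circular arc sharing \emph{both} of its endpoints. The second sub-arc of the biarc runs from $J$ to $(c,0)$, and $f(x_J)$ does not lie on it, so the hypothesis is not met. What is left is precisely the patching at $J$, and that patching \emph{is} the substance of the bilens theorem --- you are in effect re-deriving it in a harder setting (mismatched angles $\alpha',\beta''$ rather than the true $\alpha,\beta$) instead of citing it. Once the bilens theorem is used as a black box, your monotonicity-in-$p$ apparatus is not needed; the remaining step is the much easier monotonicity in the angle parameters, handled one sub-arc at a time.
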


\begin{figure}[t]
\centering%
\Pfig{.96\textwidth}{ThBBB}{Illustration to Prop.\,4 (shaded region is bilens)}%
\end{figure}

The proof (\cite{AKinterpol}, Th.\,4) includes inequalities
(labels correspond to \RefFig{ThBBB}):
\equa{
  \begin{array}{rl}
     &\overbrace{B_1(x;c,\alpha',\beta''\!,a)}^{AJ_1B}\le
     \overbrace{B_1(x;c,\alpha,\beta''\!,a)}^{AJ_2B}\le
     \overbrace{B_1(x;c,\alpha,\beta,a)}^{AJ_3B}\le 
     f(x);\\[1.5ex]
     f(x)\le\!
	&\underbrace{B_2(x;c,\alpha,\beta,b)}_{AJ_4B}\le
     \underbrace{B_2(x;c,\alpha''\!,\beta,b)}_{AJ_5B}\le
     \underbrace{B_2(x;c,\alpha''\!,\beta',b)}_{AJ_6B}
  \end{array}
}
Inequalities near $f(x)$ result from {\em bilens theorem}
(\cite[Th.\,3]{AKinterpol}, \cite[Th.\,1]{Bilens}).
The rest can be obtained by inspecting the involved circular arcs.

\begin{Prop}
The narrowed bounding region for spiral data 
$\{(x_j,y_j),\:j=1,\ldots,M{+}1,\;\tau_1,\tau_{M+1}\}$
with increasing curvature is defined by
\begin{subequations}\label{Bound2}
\Equa{Bound2Def}{%
   B_0(x;c_j,\alpha'_j,\beta''_j,p'_j)
   \le f_j(x)\le 
   B_0(x;c_j,\alpha''_j,\beta'_j,p''_j).
}
Biarcs' parameters are
\Equa{Bound2Tab}{%
  \begin{tabular}{|c|c|c||c|c|c|}\hline
                 & $j=1$ & $1<j\le M$ &
                 & $1\le j< M$ & $j=M$ \\ \hline
   ${\alpha'_j}_{\strut}=$ & $\xi_1$ &$\max(-\rho_j{-}\xi_{j-1}, -\eta_j)$ &
   ${\beta'_j}_{\strut}=$  & $\max(-\xi_j,\rho_{j+1}{-}\eta_{j+1})$ & $\eta_M$\\ \hline
   ${\alpha''_j}_{\strut}=$ &$\xi_1$ & $\xi_j$ &
   ${\beta''_j}_{\strut}=$ &$\eta_j$ & $\eta_M$ \\ \hline
   $a_j=$  & $-\infty$
            & $ {\dfrac{1}{c_{j-1}}\sin \beta'_{j-1}}^{\strut}_{\strut}$ &
   $b_j=$ & $-{\dfrac{1}{c_{j+1}}\sin\alpha'_{j+1}}_{\strut}^{\strut}$ & $+\infty$ \\ \hline
   $p'_j =$ & \multicolumn{2}{c||}{$-\dfrac{\sin\omega}{a_jc_j+\sin\alpha'_j},\quad\omega={\dfrac{\alpha'_j+\beta''_j}2}^{\strut}$}&
   $p''_j=$ & \multicolumn{2}{c|}{$\dfrac{b_jc_j-\sin\beta'_j}{\sin\omega},\quad\omega={\dfrac{\alpha''_j+\beta'_j}2}_{\strut}$} \\ \hline
  \end{tabular}   
}
\end{subequations}
\end{Prop}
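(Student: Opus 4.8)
The plan is to reduce Proposition~5 to a segment-by-segment application of Proposition~4. Fix a chord $P_jP_{j+1}$ and let $f_j$ be the corresponding arc of an arbitrary spiral interpolant, with nodal curvatures $k_j\le k_{j+1}$ (well defined since the interpolant is a single $C^2$ curve) and endpoint angles $\alpha_j,\beta_j$ relative to that chord. I must supply, valid for \emph{every} such interpolant, two-sided bounds $\alpha'_j\le\alpha_j\le\alpha''_j$, $\beta'_j\le\beta_j\le\beta''_j$ and $a_j\le\kappa_j(x)\le b_j$, feed them into~\eqref{T4}, and finally rewrite the resulting $B_1,B_2$ curves in the $B_0$ parametrization. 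The upper angular bounds $\alpha''_j=\xi_j$, $\beta''_j=\eta_j$ are the two outer inequalities of~\eqref{Enclose}; for $j=1$ and $j=M$ the equalities $\alpha_1=\xi_1$, $\beta_M=\eta_M$ from~\eqref{FirstLast} force the corresponding table columns to collapse. For the lower angular bounds I chain~\eqref{Enclose} across two neighbouring segments via~\eqref{abij}: from $\alpha_j\ge-\beta_j\ge-\eta_j$ I get one candidate, and from $\alpha_j\Eqref{abij}\beta_{j-1}-\rho_j$ together with $\beta_{j-1}\ge-\alpha_{j-1}\ge-\xi_{j-1}$ (by~\eqref{Enclose} on segment $j-1$) I get $\alpha_j\ge-\rho_j-\xi_{j-1}$; the larger of the two is $\alpha'_j=\max(-\rho_j-\xi_{j-1},-\eta_j)$. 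Symmetrically, $\beta_j\ge-\xi_j$ and $\beta_j\Eqref{abij}\rho_{j+1}+\alpha_{j+1}\ge\rho_{j+1}-\eta_{j+1}$ give $\beta'_j=\max(-\xi_j,\rho_{j+1}-\eta_{j+1})$.

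The curvature bounds come from the monotonicity of $\kappa_j$ combined with~\eqref{acbc} on the \emph{adjacent} segment. On segment $j$ one has $k_j\le\kappa_j(x)\le k_{j+1}$; inequality $\sin\beta_{j-1}<k_jc_{j-1}$ (which is~\eqref{acbc} for segment $j-1$) gives $k_j>\tfrac1{c_{j-1}}\sin\beta_{j-1}\ge\tfrac1{c_{j-1}}\sin\beta'_{j-1}=a_j$, and $k_{j+1}c_{j+1}<-\sin\alpha_{j+1}$ gives $k_{j+1}<-\tfrac1{c_{j+1}}\sin\alpha_{j+1}\le-\tfrac1{c_{j+1}}\sin\alpha'_{j+1}=b_j$; at the ends $a_1=-\infty$ and $b_M=+\infty$ are vacuous. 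Here I use $\alpha'_{j+1}\le\alpha_{j+1}$, $\beta'_{j-1}\le\beta_{j-1}$ together with the fact that, under constraints~\eqref{Lim180}, all of $\xi_\bullet,\eta_\bullet$ — hence also $\alpha'_\bullet,\beta'_\bullet$ and $\alpha_\bullet,\beta_\bullet$ — lie in $[-\tfrac\pi2,\tfrac\pi2]$, so $\sin$ is monotone on the relevant intervals. The hypotheses $\alpha'_j+\beta''_j\ge0$ and $\alpha''_j+\beta'_j\ge0$ of Proposition~4 now hold automatically, e.g. $\alpha'_j+\beta''_j\ge-\eta_j+\eta_j=0$, so~\eqref{T4} yields $B_1(x;c_j,\alpha'_j,\beta''_j,a_j)\le f_j(x)\le B_2(x;c_j,\alpha''_j,\beta'_j,b_j)$.

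It remains to pass to the $B_0$ family and to dispose of the degenerate corners. By the bracketed identity in~\eqref{abp}, $B_1(x;c_j,\alpha'_j,\beta''_j,a_j)=B_0(x;c_j,\alpha'_j,\beta''_j,p'_j)$ with $p'_j=-\sin\omega/(a_jc_j+\sin\alpha'_j)$, $\omega=(\alpha'_j+\beta''_j)/2$, and likewise $B_2(x;c_j,\alpha''_j,\beta'_j,b_j)=B_0(x;c_j,\alpha''_j,\beta'_j,p''_j)$ with $p''_j=(b_jc_j-\sin\beta'_j)/\sin\omega$, $\omega=(\alpha''_j+\beta'_j)/2$; moreover $a_jc_j<-\sin\alpha'_j$ (shown above) guarantees $p'_j\in[0,\infty]$, and symmetrically for $p''_j$. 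This is exactly the last row of~\eqref{Bound2Tab}. For $j=1$ the value $a_1=-\infty$ turns the lower bound into $B_0(x;c_1,\xi_1,\eta_1,0)=A(x;c_1,-\eta_1)$ by~\eqref{BiarcInf}, and $b_M=+\infty$ turns the upper bound into $B_0(x;c_M,\xi_M,\eta_M,\infty)=A(x;c_M,\xi_M)$; and whenever the maximum defining $\alpha'_j$ is attained at $-\eta_j$ (resp. the one defining $\beta'_j$ at $-\xi_j$) one has $\alpha'_j+\beta''_j=0$ (resp. $\alpha''_j+\beta'_j=0$), so~\eqref{BiarcZero} collapses the biarc back to the plain circular bound of Proposition~1 — in particular the narrowed region is never wider than the simple one. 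The main obstacle is the bookkeeping: the curvature bound on segment $j$ is expressed through the angular bound of segment $j\pm1$, so one must check that all these estimates are mutually consistent over the whole spline and that every angle in sight stays within $[-\tfrac\pi2,\tfrac\pi2]$; once that is arranged, each individual step is a one-line consequence of~\eqref{Enclose}, \eqref{abij}, \eqref{acbc} and Proposition~4.
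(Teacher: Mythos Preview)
Your proposal is correct and follows essentially the same route as the paper: derive the angular ranges $\alpha'_j,\alpha''_j,\beta'_j,\beta''_j$ by chaining~\eqref{Enclose} across adjacent chords via~\eqref{abij}, obtain the curvature bounds $a_j,b_j$ from~\eqref{acbc} on the neighbouring segments, verify the sign conditions of Proposition~4, apply~\eqref{T4}, and then rewrite $B_1,B_2$ as $B_0$ via~\eqref{abp} with the degenerate cases~\eqref{BiarcsDeg} recovering the simple arc bounds. One small remark: your claim ``$a_jc_j<-\sin\alpha'_j$ (shown above)'' actually needs one more ingredient than you have written --- besides $a_j\le k_j$ from segment $j{-}1$, you must also invoke~\eqref{acbc} on segment~$j$ itself ($k_jc_j\le-\sin\alpha_j\le-\sin\alpha'_j$) to close the chain; once noted, the rest stands.
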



\begin{proof}[\bf Proof] 
Transform~\eqref{Enclose}, following the scheme:
\equa{%
   \begin{array}{l}
     -\xi_{1} \le \beta_{1} \le \eta_{1}\\
     -\eta_2\le \alpha_2 \le \xi_2
   \end{array}
   \So
   \begin{array}{l}
     -\xi_{1}{-}\rho_2 \le\beta_{1}{-}\rho_2 \le \eta_{1}{-}\rho_2\\
     -\eta_2{+}\rho_2\le \alpha_2{+}\rho_2 \le \xi_2{+}\rho_2
   \end{array}
  \So
   \begin{array}{l}
     -\xi_{1}{-}\rho_2 \le \alpha_2 \le \xi_2,\\
     -\eta_2{+}\rho_2\le \beta_{1} \le \eta_{1}.
   \end{array}
}
Combining the first and the last column yields ranges~\eqref{T4cond} for $\beta_1$ and $\alpha_2$:
\equa{%
   \renewcommand{\tmp}[2]{\underbrace{#1}_{\displaystyle#2}}
   \tmp{\max(-\xi_{1},{\rho_2{-}\eta_2})}{\beta'_1} \le \beta_{1} \le \tmp{\eta_{1}}{\beta''_1}\,,\qquad
   \tmp{\max(-\rho_2{-}\xi_{1},{-\eta_2})}{\alpha'_2} \le \alpha_2 \le \tmp{\xi_2}{\alpha''_2}\,.
} 
Similarly, we obtain ranges for pairs $(\beta_2,\alpha_3)$, $(\beta_3,\alpha_4)$, \dots, 
$(\beta_{M-1},\alpha_M)$.
Missing ranges for $\alpha_1$ and $\beta_M$ are known from \eqref{FirstLast}: 
$\alpha'_1\ieq\alpha''_1\ieq\xi_1$,
$\beta'_M\ieq\beta''_M\ieq\eta_M$. 
Two rows of the table \eqref{Bound2Tab} are thus filled.

Now check conditions in~\eqref{T4}. E.\,g., for \GE{\alpha'_j{+}\beta''_j}
we have
\vspace{-.4\baselineskip}%
\equa{%
    \begin{array}{lr}
        j=1{:}\quad& \alpha'_1+\beta''_1 = \xi_1+\eta_1\stackrel{\eqref{Enclose}}{\ge} 0;\\
        j>1{:}& \alpha'_j{+}\beta''_j=\max(-\eta_j, {-}\rho_j{-}\xi_{j-1})+\eta_j=
                         \max(0,\,\eta_j{-}\rho_j{-}\xi_{j-1})\ge 0.
    \end{array}                     
}
Similarly, conditions $\alpha''_j{+}\beta'_j\ige 0$ hold.

To estimate ranges for unknown curvatures $k_j$ in nodes $j\ieq 1,\ldots,M{+}1$,
we make use of inequalities~\eqref{acbc};
non-strict form accounts for the possible case of constant curvature:
\Equa{OnChord}{%
    \begin{array}{rclll}
    \text{on the chord}
    &&P_1P_2{:}\quad&k_1 c_1 \le -\sin\alpha_1,\quad & k_2 c_1\ge\sin\beta_1;\\
    &&P_2P_3{:}     &k_2 c_2 \le -\sin\alpha_2,\quad & k_3 c_2\ge\sin\beta_2;\\
    && \cdots & \cdots & \cdots \\
    &&P_{M-1}P_M{:} &k_{M-1} c_{M-1} \le -\sin\alpha_{M-1},&k_M c_{M-1}\ge\sin\beta_{M-1};\\
    \text{last chord}
    &&P_MP_{M+1}{:}\quad&k_M c_M \le -\sin\alpha_M,  & k_{M+1} c_M\ge\sin\beta_M.
    \end{array}
}
This results in
\equa{%
   \underbrace{%
     a_j=\frac{\sin\beta'_{j-1}}{c_{j-1}}\le
      \frac{\sin\beta_{j-1}}{c_{j-1}}
    }_{\text{except $j=1$}}  
    \le k_j\le 
   \underbrace{%
     \frac{-\sin\alpha_j}{c_j}\le 
     \frac{-\sin\alpha'_j}{c_j}=b_j
    }_{\text{except last node $j=M{+}1$}},  
}
i.\,e.
\ $-\infty\ile\kappa_1(x)\ile b_1$,
\ \ $a_2\ile\kappa_2(x)\ile b_2$,
\ \ \dots,
\ \ $a_M\ile\kappa_M(x)\ile +\infty$.
Prop.\,4 can now be applied to define boundary biarcs:
\Equa{Bound2ab}{%
   B_1(x;c_j,\alpha'_j,\beta''_j,a_j)
   \le f_j(x)\le 
   B_2(x;c_j,\alpha''_j,\beta'_j,b_j).
}
It is easy to verify that degenerate cases, arising in \eqref{Bound2ab}
with $a_1\ieq{-\infty}$, or $b_M\ieq\infty$, or \EQ{\alpha{+}\beta},
result in \quo{simple} bounds~\eqref{Bound1}.
E.\,g.,
if \EQ{\alpha'_j{+}\beta''_j} is the case, $\alpha'_j\ieq{-\beta''_j}\ieq{-\eta_j}$,
and
\equa{%
B_1(x;c_j,\alpha'_j,\beta''_,a_j)=B_0(x;c_j,-\eta_j,\eta_j,p)\Eqref{BiarcZero}A(x;c_j,-\eta_j).
}
Eq.~\eqref{Bound2ab} is rewritten in \eqref{Bound2} in terms of \quo{universal} function $B_0(x;\alpha,\beta,p)$
\eqref{abp}\footnote{%
In~\eqref{Bound2ab} \quo{hidden} degenerate biarcs may occur.
E.\,g., the biarc $B_2(x;c,\alpha'',\beta',b)$, regular at first sight,
becomes degenerate if \EQ{bc-\sin\beta'}, and equal to
$B_1(x;c,\alpha'',\beta',\infty)=B_0(x;c,\alpha'',\beta',0)$.
This happens in the data like shown in \RefFig{Collinear}, chord 2.
}.
\end{proof}

Note also that possible continuation of \eqref{OnChord} is
\equa{%
    q_{j-1}=-\frac{\sin\xi_{j-1}}{c_{j-1}}\le \frac{\sin\beta_{j-1}}{c_{j-1}}\le
     k_j
     \le -\frac{\sin\alpha_j}{c_j}\le \frac{\sin\eta_j}{c_j}=q_{j+1}{:}
}     
{\em unknown curvatures $k_j$ are limited by known 3-point neighbouring curvatures $q_{j\pm1}$}.
The above chosen ranges $[a_j;\,b_j]$ yield more narrow region.


It may also happen that limits $\pm\infty$ for end curvatures~\eqref{Bound2Tab} can be specified from
some additional considerations. E.\,g., in the 3-point data ($M\ieq2$),
shown below, one can assume positivity of curvature, and replace $a_1=-\infty$ by $a_1=0$.
This replaces the lower boundary arc $A(x;c_1,-\eta_1)$ (dashed) by the regular biarc
$B_1(x;c_1,\xi_1,\eta_1,0)$:\nopagebreak[4]\\
\centerline{\Infigw{.8\textwidth}{ForcedK0}}\medskip%

\begin{center}
$\star\star\star$
\end{center}

The specific occasion of the research \cite{AKinterpol} was eventuated in about 1995%
\footnote{%
Article \cite{AKinterpol} was first published in 1998 as 
\href{http://web.ihep.su/library/pubs/prep1998/ps/98-9.pdf}{Preprint IHEP, 98-9, Protvino, 1998}.
}, 
in the workshop of the Institute for High Energy Physics (IHEP, Protvino).
Research originated from a practical problem of inspection 
of a certain cam profile: there was, on a small part of the \Deg{360}-profile, a big discrepancy between
nominal and measured curves. The reason, expressed in present-day terms,
was that the design of the profile included very unfair point subsets.

While exploring the situation, the author has become interested to find
some quantitative measure of such drawbacks in drawings,
possibly, a tool for an expert-metrologist. 
Purely geometrically the problem was 
shaped
as follows: to describe a set of curves {\em with as few vertices as possible}, 
matching given interpolation data. 
The simplest case
(spiral, zero vertices) gave a well restricted solution,
justifying such problem setting.
Proposition\,3 is the step towards non-spiral curves.

\appendix
\def\appendixname{Appendix}

\section{Application to computer-aided tolerancing}\label{sec:Test}

\subsection{Bounding regions in the view of tolerance control}

Bounding region is not quite a new concept for industrial design.
Returning to \RefFig{Tolerance}, consider the hole, dimensioned as $\Diam{}\,26{\pm}0.2$.
This can be treated as the bounding region (in the form of ring) for the circular profile.
The given tolerance admits inaccuracy of order~$0.2$ in coordinates of the center of the hole.
If declared as the datum element, the hole defines the origin of the coordinate system;
consequently, any other dimension of the type \quo{coordinate} on this drawing should be agreed with this inaccuracy.
E.\,g., setting tolerance of 
order~$0.1$ for the dimension $100$ would be an evident error in the drawing.

Normally such error will be corrected by an expert-metrologist before starting the manufacturing process.
Possible correction could be specifying {\em the roundness tolerance~$\varepsilon\ll 0.1$} for the hole.
This allows the value of diameter still to be in the range $25.8\ile\Diam{}\ile26.2$,
but reduces the width of the ring-shaped bounding region to much smaller acceptable value.

Note that, given all dimensions and tolerances,
one can usually construct the exact model of the ideal part, corresponging, for the case of the hole,
to $\Diam{}\,26{\pm}0$.
The position of any point of the circular and straight line elements will be perfectly determined.
But this is not so
for the  elements like profile $ABCD$.
Since it is not presented by exact equations,
and the interpolation method is not exactly specified,
there occurs certain {\em nondeterminancy} is constructing the nominal profile itself.

Assuming a coordinate inspection machine as a device for tolerance control,
one should assure precision of coordinate measurements about 10\% of tolerances under inspection.
The more accurate is the device, the more precisely the distance from the measured point to a circle/line
can be defined. Again, this is not the case for the curvilinear profile: 
one could define exact distance, say, to the currently chosen interpolant. 
The latter could be different from one, previously used in NC-machining process.

So, the nondeterminancy of the profile behaves as inevitable addition to the measurement error.
Constructing the bounding region
estimates this nondeterminancy, 
and allows us to set its acceptable limits, called by the accuracy requirements.
The width of the region is the objective measure of fairness of the given point set,
and may serve as the criterion to demand a designer for more detailed profile description.

For traditional interpolation methods
nothing ensures that some interpolant passes within the prescribed bounding region.
But, on the other hand, one can verify whether it does or not.
An example is discussed below.

\begin{figure}[t]
\centering%
\Pfig{.96\textwidth}{Cubic}{Interpolating data with a cubic spline
}%
\end{figure}

\subsection{Test with cubic spline interpolation}

Algorithms of interpolation with spirals are not widely distributed with CAD software.
On the other hand, interpolating spiral data by, e.\,g., a cubic spline,
will usually not return a spiral.
However, as soon as the region is known, wherein the sought for curve definitely lies,
acceptance test for any interpolation method can be performed.
We add to this note an example of such test.
In \RefFig{Cubic} given data points and end tangents
were interpolated by a cubic spline curve. 
The nodes in the example are spaced out such as one could visually estimate
if the spline fits the prescribed region
(three last chords are shown magnified).

The spline can be parametrized by accumulated chord length,
which approximates arc length of the interpolated curve.
Assuming that the 3-rd order Bezi\'er polynomials $x(t),\,y(t)$ are close to the Taylor series  $x(s),\,y(s)$
of the naturally parametrized unknown curve
gives rise to set boundary conditions at the start point of the cubic spline as
$(x'(0),y'(0))\approx\nvec{\tau_1}$, and similarly at the endpoint.

Points setting of 5 first nodes, compared to expected curvatures,
is evidently unacceptable for good interpolation.
It is exhibited in both location of the spline out of prescribed region,
and large beatings in the curvature plot.

Situation on the right side looks much better,
although points are set still too rare,
compared with typical industrial drawings.
Recall that the bounding region encloses some unknown,
but well defined curve
(\eg, solution of a differential equation, or some other implementation of designer's concept).
With such test it is possible:
\begin{itemize}\setlength{\parskip}{0pt}\setlength{\topsep}{0pt}\setlength{\itemsep}{2pt}%
\item
to estimate whether the bounding region is sufficiently narrow, i.\,e. the unknown curve
is well/ill-defined by the given point set;
\item
to estimate closeness of the unknown curve and the interpolant,
which, even for non-spiral interpolation method, may fall inside the bounding region.
\end{itemize}

\subsection{Is the interpolant in \RefFig{Cubic} \quo{unfair}?}

Appearance of the curvature plot in \RefFig{Cubic},
with curvature extrema in every node and between them,
is rather similar to the examples in \hbox{\cite[Figures 9{.}8--9{.}13]{Farin}}.
The question arises if the non-monotonicity of the curvature plot is 
essential, and should be somehow corrected.

The general answer depends on the values of curvatures,
and the particular reason to avoid the beatings.
Analyzing the problem, one should not forget 
that the objects of computer-aided design become the objects of further manufacturing,
which is not geometrically precise.
We can compare the model and the real part (cam profile, highway path, etc.),
and find that the latter fits well prescribed tolerances.
But small difference between two curves could result in 
rather big distortions of curvature plot:\\
\Infigw{1\textwidth}{RoundXY}\\

In the above illustration the data is chosen on the circular arc of radius 10
with $P_1\ieq(0,0)$, and \EQ{\tau_1}.
Ideal descrete curvature plot is the line $q(i)=0{.}10=\const$.
Manufacturing/measurement errors were imitated by rounding the coordinates
to two decimal digits:
$P_2\approx(0{.}5233,\,0.0137)$ was repalced by $(0.52,\,0.01)$,
$P_3\approx(1{.}0453,\,0.0548)$              by $(1.05,\,0.05)$,
\dots, and
$P_{21}\approx(8{.}6603,\,5.0)$           by $(8.66,\,5.0)$.
This rounding caused well visible beatings in discrete curvature plot.
The geometric reason of such behavior is evident:
for a small circular arc $ABC$ a slight shift of the midpoint~$B$ 
effects a dramatic change in radius or curvature.
The smaller is the angular measure of the arc, the greater is this effect.\smallskip

The above considerations seem to agree with 
the note in \cite[Sec.\,23.1]{Farin}: the definition of fairness,
based on minimum of curvature extrama,
{\em \quo{is certainly subjective; however, it has proved to be a practical concept}.}


\end{document}